\providecommand{\U}[1]{\protect\rule{.1in}{.1in}}
\newtheorem{theorem}{Theorem}
\newtheorem{lemma}[theorem]{Lemma}
\newenvironment{proof}[1][Proof]{\noindent\textbf{#1.} }{\ \rule{0.5em}{0.5em}}
\begin{document}

\title{Poisson Hypothesis for Open Networks at Low Load\footnotetext{\noindent Part
of this work has been carried out in the framework of the Labex Archimede
(ANR-11-LABX-0033) and of the A*MIDEX project (ANR-11-IDEX-0001-02), funded by
the \textquotedblleft Investissements d'Avenir" French Government programme
managed by the French National Research Agency (ANR). Part of this work has
been carried out at IITP RAS. The support of Russian Foundation for Sciences
(project No. 14-50-00150) is gratefully acknowledged.}}
\author{A. Rybko$^{\dag\dag}$, Senya Shlosman$^{\dag,\dag\dag}$ and A.
Vladimirov$^{\dag\dag}$\\$^{\dag}$Aix Marseille Universit\'{e}, Universit\'{e} de Toulon, \\CNRS, CPT UMR 7332, 13288, Marseille, France\\senya.shlosman@univ-amu.fr \\$^{\dag\dag}$Inst. of the Information Transmission Problems,\\RAS, Moscow, Russia \\rybko@iitp.ru, shlos@iitp.ru, vladim@iitp.ru}
\maketitle

\begin{abstract}
We study large communication networks of the mean-field type. The input flows
to the nodes of the network are supposed to be stationary and with low rate.
We show that such a network is ergodic, i.e. it goes to the stationary state,
which does not depend on the initial state of the network. This is in contrast
with the high load regime, when the large time behavior of the network might
depend on its initial state. Our technique is based on the coupling
construction, which couples two Non-Linear Markov Processes.

\end{abstract}

\section{Introduction and results}

In this paper we study large information networks with low load. Our goal is
to show that they obey a certain pattern, which is known under the name of
Poisson Hypothesis.

The Poisson Hypothesis is a device to predict the behavior of large queuing
networks. It was formulated first by L. Kleinrock in \cite{K}, and it was
assumed to hold universally. It concerns the following situation. Suppose we
have a large network of servers, through which many customers are traveling,
being served at different nodes of the network. If the node is busy, the
customers wait in the queue. Customers are entering into the network from the
outside via some nodes, and these external flows of customers are Poissonian,
with constant rates. The service time at each node is random, depending on the
node, and the customer. The PH prediction about the (long-time, large-size)
behavior of the network is the following:

\begin{itemize}
\item consider the total flow $\mathcal{F}$ of customers to a given node
$\mathcal{N}.$ Then $\mathcal{F}$ is approximately equal to a Poisson flow,
$\mathcal{P},$ with a time dependent rate function $\lambda_{\mathcal{N}%
}\left(  T\right)  .$

\item The exit flow from $\mathcal{N}$ -- \textbf{not} Poissonian in general!
-- has a rate function $\gamma_{\mathcal{N}}\left(  T\right)  ,$ which is
smoother than $\lambda_{\mathcal{N}}\left(  T\right)  $ (due to averaging,
taking place at the node $\mathcal{N}$).

\item As a result, the flows $\lambda_{\mathcal{N}}\left(  T\right)  $ at
various nodes $\mathcal{N}$ should go to a constant limits $\bar{\lambda
}_{\mathcal{N}}\approx\frac{1}{T}\int_{0}^{T}\lambda\left(  t\right)  dt$, as
$T\rightarrow\infty,$ the flows to different nodes being almost independent.

\item The above convergence is uniform in the size of the network.
\end{itemize}

Note that the distributions of the service times at the nodes of the network
can be arbitrary, so PH deals with quite a general situation. The range of
validity of PH is supposed to be the class of networks where the internal flow
to every node $\mathcal{N}$ is a union of flows from many other nodes, and
each of these flows constitute only a small fraction of the total flow to
$\mathcal{N}.$ If true, PH provides one with means to make easy computations
of quantities of importance in network design.

The rationale behind this conjectured behavior is natural: since the inflow is
a sum of many small inputs, it is approximately Poissonian. And due to the
randomness of the service time the outflow from each node should be
\textquotedblleft smoother\textquotedblright\ than the total inflow to it.
(This statement was proven in \cite{RSV1} under quite general conditions.) In
particular, the variation of the latter should be smaller than that of the
former, and so all the flows should go with time to corresponding constant values.

In the paper \cite{RS1} the Poisson Hypothesis is proven for simple networks
in the infinite volume limit, under some natural conditions. The
counterexamples to such universal behavior are also known by now. Some were
constructed in \cite{RS2}, \cite{RSV2}.\ \ 

It was argued in \cite{RSV2}, that the violation of the Poisson Hypothesis can
happen only at high load, and that the load plays here the same role as the
temperature in the statistical mechanics, so that the high (low) load
corresponds to low (high) temperature. We have shown in \cite{RSV3} that
indeed the Poisson Hypothesis holds for the closed networks with low load. In
the present paper we show that the Poisson Hypothesis behavior holds also for
the open networks with low load. (The time gap between the paper \cite{RSV3}
and the present one is due to the fact that certain general results needed
were not available earlier. Now they are established, see \cite{BRS}.)

The main technical innovation of our paper is that we prove our result by
constructing a suitable coupling of two Non-Linear Markov Processes (in
Sections \ref{cou1}, \ref{cou2} below). The coupling constructed shows that
the two NLMP merge together as time diverges, provided the load of the network
is low. It is thus reminiscent of the FKG-uniqueness results of statistical
mechanics, when it is obtained via coupling constructions for systems with
attraction, \cite{H}.

In the next Section we describe the basic finite network, from which large
networks of the mean-field type can be built. This mean-field construction is
explained in the Section 3, and the corresponding Non-Linear Markov Process
(NLMP) is defined in Section 4. The general properties of the networks with
low input flows are established in the Section 5. The Section 6 contains the
main result of the paper -- that of ergodicity of the NLMP, corresponding to
the low inflow. It is established by using the coupling of Sections
\ref{cou1}, \ref{cou2}.

\section{Basic network}

The main object of our study are large queueing networks that are composed of
many identical copies of a finite basic network $G$. In this section we will
describe this basic network.

\subsection{$G:$ Markov chain of queues}

The network $G$ consists of a finite number of servers, $s,$ also called
nodes. At any given time $t$, each node is populated by a finite number of
customers (also called particles) of different classes. The set $J$ of all
classes is finite, $|J|<\infty$. It is convenient from the very beginning to
include into the class $j$ of the customer the address of server $s\left(
j\right)  \in G$ where the customer is currently located. The customers wait
in the queue for their turn to be served by the node. The order of service is
governed by the service discipline. After being served, the customer either
jumps to some node of $G$ (becoming there the last in the queue), or leaves
the network.

The traffic of customers in $G$ is governed by the routing matrix $P$. This is
a $|J|\times|J|$ matrix, whose rows and columns correspond to the classes of
customers in the network. The element $p_{ij}$ of $P$ is the probability of a
customer $c$ of class $i$ to change its class from $i$ to $j$ after the
service. As soon as the service of $c$ is over, the customer jumps with
probability $p_{ij}$ to the node $s\left(  j\right)  .$ With the complementary
probability $p_{i0}=1-\sum_{j\in J}p_{ij}$ the customer leaves the network.

\subsection{Queues}

We restrict consideration to Markov networks where the service time of each
customer is exponentially distributed. We also consider service disciplines
that depend only on the order of arrival of customers to the queue (and not on
the interarrival times or any other history). Namely, for each node $n$ and
each queue $x=\left(  j_{1},...,j_{k}\right)  $ of length $k$ at this node, a
collection $\gamma_{n}(x,j_{r}):r=1,\dots,k$ of service rates is defined by
the service discipline. The formal definition will be given later.

For the alphabet $J=\{1,\dots,|J|\}$ we define the configuration space
\[
X=J^{\ast}=\bigcup_{k=0,1,\dots}J^{k}%
\]
of finite words in this alphabet, with $J^{0}=\{\varnothing\}$ and $J^{1}=J$.
We will use the notation $x=(x_{1},\dots,x_{k})\equiv(j_{1},\dots,j_{k})$ for
the list of all customers in the network. Denote $|x|=k$.

The order of customers in the queue coincides with the order of their arrival,
that is, $x_{1}$ is the class of the oldest customer in the queue and $x_{k}$
is that of the last arrived. (The customer $x_{1}$ is not necessarily the
first to be served, since we consider a rather general class of service disciplines.)

\subsection{Network events}

The current state $x\in X$ can be changed as a result of arrival events and
service events. An \emph{arrival event} of type $j$ takes $x$ to
\[
x\oplus j=\{j_{1},\dots,j_{|x|},j\}.
\]
\emph{Service event} may only happen if $x\neq\emptyset$. If $x=\{j_{1}%
,\dots,j_{k-1},j_{k},j_{k+1},\dots,j_{|x|}\},$ i.e. $k\leq\left\vert
x\right\vert ,$ then the service event operation $\ominus\ast_{k}$ is the jump
from $x$ to
\[
x\ominus\ast_{k}=\{j_{1},\dots,j_{k-1},j_{k+1},\dots,j_{|x|}\}.
\]
With some abuse of notation we also denote this jump as $x\ominus j_{k}.$
Service and arrival events may happen simultaneously, if a particle whose
service is finished, joins the same queue again, see below.

\subsection{Markov dynamics}

The random evolution of the network $G$ is a continuous-time Markov process
$\mathcal{G}$ on the countable configuration space $X$. Let us write down the
rates of all arrival and service events as functions of the current
configuration $x\in X$.

The external arrivals of clients of type $j\in J$ is a Poisson process with
rate $\lambda_{j}(t)$. We will consider the time homogeneous case, when
$\lambda_{j}(t)\equiv\lambda_{j}$, though for technical reasons we will also
use sometime the general case of time-dependent rates $\lambda_{j}(t).$ The
external arrival events of customers of different types $j\in J$ are
independent of each other and of the state of the system.

A service event of type $i\in x$ happens at appropriate location in $x$ at
rate $\gamma(x,i)$. If the customer $i$ stays in the network and turns into
customer $j,$ then the configuration of the system becomes $x\ominus i\oplus
j.$ That can happen with probability $p_{ij}.$ Another option is that the
customer leaves the network, and the network state becomes $x\ominus i.$ That
happens with probability $p_{i0}=1-\sum_{j\in J}p_{ij}.$

Apart from the pairs of associated service and arrival events mentioned above,
all the events in the system happen independently of each other. The non-zero
transition rates $r(x,y)$ of the process $G$ are thus equal to
\begin{align}
r(x,x\oplus j)  &  =\lambda_{j}(t), &  &  j\in J,\\
r(x,x\ominus i)  &  =p_{i0}\gamma(x,i), &  &  i\in x,\\
r(x,x\ominus i\oplus j)  &  =p_{ij}\gamma(x,i), & i\in x,  &  \ j\in J.
\end{align}

The state of the process $\mathcal{G}$ at the time $t$ is given by a
probability measure $\mu_{t}$ on $X$. We denote by
\[
\mathcal{P}(X)=\{\mu\in{\mathbb{R}}^{X}:\mu\left(  x\right)  \geq0,\ \ x\in
X,\ \ \sum_{x\in X}\mu\left(  x\right)  =1\}
\]
the space of probability measures on $X.$

\section{Mean-field networks}

In order to construct the mean-field networks $G_{M}$, we take the colection
of $M$ independent copies of $G$, and interconnect them as follows. The
service event $i$ at any given copy of $G$ is followed by the corresponding
arrival event $j$ at any other copy of $G$ with probability $\frac{1}{M}%
p_{ij}$. With the complementary probability, $p_{i0}=1-\sum_{j\in J}p_{j_{i}%
j}$, no arrival event happens, and the customer leaves the system.

\subsection{$\mathcal{G}_{M}:$ the mean-field type process}

The resulting Markov process $\mathcal{G}_{M}$ on the configuration product
space
\[
X^{M}=\{\mathbf{x}=(x_{1},\dots,x_{M}):x_{m}\in X,\ m=1,\dots,M\}
\]
can be described formally as follows. Let us introduce the notations:
\[
\mathbf{x}\oplus_{m}j=(x_{1},\dots,x_{m}\oplus j,\dots,x_{M})
\]
and for $i\in x_{m}$
\[
\mathbf{x}\ominus_{m}i=(x_{1},\dots,x_{m}\ominus i,\dots,x_{M}).
\]
Then the transition rates of the process $\mathcal{G}_{M}$ are given by the
relations
\begin{align}
r(\mathbf{x},\mathbf{x}\oplus_{m}j)  &  =\lambda_{j}(t),\\
r(\mathbf{x},\mathbf{x}\ominus_{m}i)  &  =(1-\sum_{j\in J}p_{ij})\gamma
(x_{m},i),\\
r(\mathbf{x},\mathbf{x}\ominus_{m}i\oplus_{m^{\prime}}j)  &  =\frac
{p_{ij}\gamma(x_{m},i)}{M},
\end{align}
for all $\quad m,m^{\prime}=1,\dots,M$, $i\in x_{m}$, $j\in J$.

\subsection{Factor-process}

The permutation group $S_{M}$ acts on $X^{M},$ and the process $\mathcal{G}%
_{M}$ is invariant under this action. The factor space can be identified with
the atomic measures on $X$. So we will use the notation
\begin{equation}
\mathbf{x}=\frac{1}{M}\sum_{m=1}^{M}\delta_{x_{m}}, \label{QE1}%
\end{equation}
where $\delta_{x}$ is the unit measure at $x\in X$.

Denote by $\mathcal{P}_{M}(X)\subseteq\mathcal{P}(X)$ the subspace of all
measures of the form (\ref{QE1}). The factor-process, denoted by the same
symbol, $\mathcal{G}_{M},$ becomes a continuous-time Markov process on the
(countable) configuration space $\mathcal{P}_{M}(X)$. Its state at time $t$ is
a probability measure%

\[
\mu_{t}^{M}=\{\mu_{t}^{M}(\mathbf{x}):\mathbf{x}\in\mathcal{P}_{M}(X)\}
\]
on $\mathcal{P}_{M}(X)$ and thus on $\mathcal{P}(X)$. The expectation
${\mathbb{E}}\mu_{t}^{M}$ is an element of $\mathcal{P}(X)$ but, in general,
not an element of $\mathcal{P}_{M}(X)$.

\section{Non-linear Markov process}

Let us consider the limit of $\mathcal{G}_{M}$ as $M\rightarrow\infty$. To
this end, let us choose one of $M$ copies of $G$ in the system $G_{M}$ -- say,
the first one, $G_{1}$ -- and observe for $t\geq0$ the arrival and service
events only at this copy. In other words, we take a projection $\mathcal{H}%
_{M}$ of $\mathcal{G}_{M}$ to $G_{1},$ integrating out the remaining degrees
of freedom.

If we want to know the evolution of the process $\mathcal{H}_{M},$ then, in
addition to the initial configuration $x_{0}\in X$ we need, of course, to
specify the initial state $\mu_{t=0}^{M}\in\mathcal{P}_{M}(X)$ of the whole ensemble.

Suppose that the initial measures $\mu_{0}^{M}$ converge weakly to some
$\mu_{0}\in\mathcal{P}(X)$ as $M\rightarrow\infty,$ and consider the limiting
process $\mathcal{H=}\lim_{M\rightarrow\infty}\mathcal{H}_{M}.$ The existence
and uniqueness of the limiting process is proven in \cite{BacQu13}. This
stochastic process will be referred to as the \emph{non-linear Markov process}
(NLMP) $\mathcal{H}$. To specify its evolution we need to know the initial
configuration $x\in X$ and the initial measure $\mu_{0}\in\mathcal{P}(X)$. The
NLMP $\mathcal{H}$ is not a Markov process on $X$ because of its dependence on
the whole measure $\mu_{0}$ and not just on $x(0)$.

The current configuration of this extended process is the pair $(\mu
_{t},x(t))$, and this configuration defines the distribution of the future
configurations $(\mu_{s},x(s))$, $s\geq t$, uniquely. Moreover, the measure
$\mu_{s}$ is defined by $\mu_{t}$ in a deterministic way, for $s\geq t$. The
measure $\mu_{t}$ is a solution of a system of differential equations which we
write down below. Once the family $\left\{  \mu_{t},t\geq0\right\}  $ is
given, the process $x(t)$ becomes an inhomogeneous Markov process on $X$, see
below for more details.

\subsection{The process equations}

The evolution equations for the state $\mu_{t}$ of the process $\mathcal{H}$
are conveniently written in terms of \emph{ flow rates} which we define now.

The \emph{ outflow rate} of class $j$ customers from the system $\mathcal{H}$
at time $t$ is given by
\begin{equation}
u_{j}(t)=\sum_{x\in X}\mu_{t}(x)\sum_{r:x_{r}=j}\gamma(x,x_{r}),\qquad j\in J
\label{YE2a}%
\end{equation}

The \emph{ inflow rate} of class $j$ customers to the system is then equal to
\begin{equation}
v_{j}(t)=\lambda_{j}(t)+w_{j}(t),\qquad j\in J, \label{YE3}%
\end{equation}
where
\begin{equation}
w_{j}(t)=\sum_{i\in J}p_{ij}u_{i}(t),\qquad j\in J, \label{YE3a}%
\end{equation}
is the \textit{internal inflow rate}.

Thus the total inflow is a time-inhomogeneous (vector) Poisson process, whose
rate vector is $v(t)=\lambda(t)+w(t)$. The queueing process $x(t)$ is then an
inhomogeneous Markov process corresponding to the time-dependent arrival
process with rates $v_{j}(t)$, $j\in J$, and the service rates $\gamma(x,i)$,
$i\in x$.

An arrival event of type $j$ converts the configuration from $x$ to $x\oplus
j$. The rate of this arrival equals to $\mu\left(  x\right)  v_{j}$.
Analogously, a service event of type $i$ converts $x$ to $x\ominus i$. Its
rate is $\mu\left(  x\right)  \gamma(x,i)$.

We have described the dynamics of $\mathcal{H}$ in terms of arrival and
service events by specifying their rates and the corresponding changes of the
measure $\mu$. The time evolution of $\mu_{t}\in\mathcal{P}(X)$ is given by
the following countable system of ordinary differential equations:
\begin{equation}
\dot{\mu}_{t}=\sum_{x\in X}\mu_{t}(x)\left[  \sum_{r=1}^{|x|}\gamma
(x,x_{r})(\delta_{x\ominus x_{r}}-\delta_{x})+\sum_{j\in J}v_{j}%
(t)(\delta_{x\oplus j}-\delta_{x})\right]  . \label{YE7}%
\end{equation}

Here we treat the measure $\mu_{t}$ as a vector with coordinates $\mu_{t}(x),$
and $\delta_{y}$-s denotes the $\delta$-measures at various points $y\in X.$
The first sum in brackets corresponds to service events that happen at
configurations $x\in X$, $|x|>0$, with (configuration-dependent) rates
$\gamma(x,j)$, and the second sum corresponds to arrival events that happen at
all configurations $x\in X$ with (configuration-independent) rates $v_{j}(t)$,
$j\in J$.

Relations (\ref{YE2a})--(\ref{YE7}) describe completely the evolution of
$\mu_{t}$. The resulting dynamical system is non-linear since its right-hand
side contains terms $\mu_{t}(x)v_{j}(t)$, which are quadratic in $\mu,$ since
$v_{j}(t)$ depends on $\mu_{t}$ linearly. The existence and uniqueness of the
solution $\mu_{t}$ on $[0,+\infty)$ of the system (\ref{YE7}) for any initial
state $\mu_{0}$ is proven in \cite{BRS} under minimal assumptions on the
network. In particular, the existence and uniqueness hold under the following
\emph{rate conditions}: for some $\lambda_{+}<\infty$ and $0<\gamma_{-}%
\leq\gamma_{+}<\infty$ we have
\begin{equation}
\lambda_{j}(t)\leq\lambda_{+}\quad\text{for all}\quad j\in J,\ t\geq0,
\label{RC1}%
\end{equation}%
\begin{equation}
\gamma_{-}\leq\gamma(x)\leq\gamma_{+}\quad\text{for all}\quad x\in
X,\ x\neq\emptyset, \label{RC2}%
\end{equation}
where $\gamma(x)=\sum_{r=1}^{|x|}\gamma(x,x_{r})$.

Note that under the rate conditions the total inflow rates $v_{j}(t)$ in the
system have a uniform upper bound $V<\infty$. So for each $x\in J$ the
function $\mu_{t}(x)$ is Lipschitz continuous in $t,$ with a Lipschitz
constant which does not depend on $x,t$, or $\mu_{0}$.

\section{Processes with low inflows}

\subsection{Bounds for a single queue}

Now let us consider a server that receives independent Poisson inflows of
particles of classes $j\in J$ with rates $\lambda_{j}(t)$. As in the NLMP
model, there is no feedback, that is, served customers never return to our server.

In what follows we assume that $\lambda_{j}(t)\leq\varepsilon$ for all
$t\geq0$, where $\varepsilon>0$ is a small parameter. Let us derive some
bounds on the length $|x|$ of the queue at this server. Recall that our
service discipline is rather general, but the total service rate $\gamma(x)$
is neither too low nor too high for any $x\neq0$, see rate conditions
(\ref{RC2}).

We will derive bounds on the distribution $\nu_{t}$ of queue length, provided
the queue is empty at $t=0$. Denote by $Q$ the random process of queue
lengths. The trajectories of $Q$ are step functions, taking values in
${\mathbb{Z}}_{+}=\{0,1,2,\dots\}$.

Denote by $\nu_{t}(k)$ the probability that there are exactly $k$ particles in
the queue at time $t$. We have
\[
\sum_{k=0,1,\dots}\nu_{t}(k)=1,\quad t\geq0.
\]

Let us compare the process $Q$ with the stationary process $Q^{\prime}$,
defined as follows:

\begin{itemize}
\item All the customers in $Q^{\prime}$ have the same exponentially
distributed service time and the FIFO service discipline is used;

\item The service rate in the process $Q^{\prime}$ is equal to the lower bound
${\gamma}_{-}$ on the service rates for the process $Q$;

\item The inflow to $Q^{\prime}$ is a stationary Poisson flow with rate
$\varepsilon$ for each type $j\in J.$
\end{itemize}

For $\varepsilon$ small enough such a process $Q^{\prime}$ exists and is
unique. One easily sees that the queues in $Q^{\prime}$ dominates
stochastically those in $Q$ at any time $t\geq0$. Indeed, by coupling we may
organize the pairs of evolutions $q(t)$ and $q^{\prime}(t)$ in such a way
that, apart from synchronous arrivals to both queues, there are additional
arrivals to $q^{\prime}$, and, apart from synchronous service attempts, there
are additional service attempts at $q$. Then $|q^{\prime}(t)|\geq|q(t)|$ for
all $t\geq0$ since $|q(0)|=0$.

For the stationary process $Q^{\prime}$ the explicit form of the queue length
distribution $\nu^{\prime}$ is known; it is the geometric distribution:
\[
\nu^{\prime}\left(  k\right)  =(1-\varkappa)\varkappa^{k},\qquad
\text{where}\qquad\varkappa=\frac{|J|\varepsilon}{\gamma_{-}}.
\]
We have
\[
\nu^{\prime}(0)=1-\varkappa,\quad\sum_{k\geq1}k\nu^{\prime}(k)=\frac
{\varkappa}{(1-\varkappa)},\quad\sum_{k>1}k\nu^{\prime}(k)=\frac{\varkappa
^{2}}{(1-\varkappa)}.
\]

Hence, from the domination argument we have

\begin{lemma}
\label{L3} Let the initial state $\nu_{t=0}$ has no customers. For
$\varkappa=\frac{|J|\varepsilon}{\gamma_{-}},$
\begin{equation}
\nu_{t}(0)\geq1-\varkappa,\quad t\geq0, \label{61}%
\end{equation}
so the fraction of non-empty queues is $O(\varepsilon)$;
\[
\sum_{k\geq1}k\nu_{t}(k)\leq\frac{\varkappa}{(1-\varkappa)},
\]
so the expected length of the queue is $O(\varepsilon)$;
\begin{equation}
\sum_{k>1}k\nu_{t}(k)\leq\frac{\varkappa^{2}}{(1-\varkappa)}, \label{63}%
\end{equation}
so the expected length of non-single queues is $O(\varepsilon^{2})$.
\end{lemma}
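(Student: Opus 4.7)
The plan is to deduce all three inequalities from the stochastic domination $|q(t)| \leq_{\mathrm{st}} |q'(t)|$ already sketched just above the lemma, combined with an explicit moment computation for the stationary geometric distribution of $Q'$. Once the coupling is in place, the bounds (\ref{61})--(\ref{63}) follow by applying the domination to the nondecreasing test functions $\mathbf{1}\{k \geq 1\}$, $k$, and $\max(k-1,0)$ respectively.

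First I would make the coupling rigorous. Because the service rate $\gamma(x)$ in $Q$ depends on the full word $x \in J^{*}$ (not just on $|x|$), $Q$ is not a birth--death chain on $\mathbb{Z}_{+}$ and some care is needed. The cleanest construction uses thinning: on one probability space, draw a background Poisson clock of rate $\gamma_{+}$ for services and $|J|\varepsilon$ for external arrivals; at each arrival epoch, route the event to $Q'$ unconditionally, and to $Q$ with probability $\lambda_{j}(t)/\varepsilon$ for the appropriate $j$; at each service epoch, attempt a service at $Q'$ with probability $\gamma_{-}/\gamma_{+}$ (if nonempty) and at $Q$ with probability $\gamma(x)/\gamma_{+}$ (if nonempty), using a single common uniform random variable with the $Q'$-threshold nested inside the $Q$-threshold (this nesting uses $\gamma(x) \geq \gamma_{-}$ from (\ref{RC2})). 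Under this coupling, every service event at $Q'$ is matched by a service event at $Q$ whenever both are nonempty, and every arrival at $Q$ is matched by an arrival at $Q'$. A short induction over event times, using $|q(0)| = |q'(0)| = 0$, then preserves $|q(t)| \leq |q'(t)|$ almost surely for all $t \geq 0$. The marginal laws of $Q$ and $Q'$ are correct by construction.

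Second, for $\varkappa < 1$ the process $Q'$ is a stationary $M/M/1$ queue with load $\varkappa$ and hence $|q'(t)|$ has the geometric marginal $\nu'(k) = (1-\varkappa)\varkappa^{k}$ at every $t$. The three quantities in the lemma are the standard moments
\[
\nu'(0) = 1 - \varkappa, \qquad \sum_{k \geq 1} k\,\nu'(k) = \frac{\varkappa}{1-\varkappa}, \qquad \sum_{k > 1} k\,\nu'(k) = \frac{\varkappa}{1-\varkappa} - (1-\varkappa)\varkappa = \frac{\varkappa^{2}}{1-\varkappa},
\]
computed by summing the geometric series. Applying stochastic domination to the three monotone functions above transfers these values into the claimed upper bounds on $\nu_{t}$.

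The only genuine obstacle is the coupling construction in the second paragraph: one has to make sure that the prescribed individual laws of $Q$ and $Q'$ are not disturbed by the simultaneous coupling. The thinning trick handles this by realizing each jump as an independent acceptance of a common Poisson event, with the acceptance thresholds nested so that $Q'$-acceptance implies $Q$-acceptance for services and $Q$-acceptance implies $Q'$-acceptance for arrivals. Everything else --- the moment computation and the passage from domination of monotone functionals to the three inequalities --- is routine.
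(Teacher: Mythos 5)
Your proposal follows the paper's own route exactly: stochastic domination of $Q$ by the stationary queue $Q'$ started from the empty state, plus the explicit geometric moments of $\nu'$; your thinning construction merely fills in the coupling that the paper only sketches in the paragraph preceding the lemma. (One caveat, shared with the paper itself: the exact third moment is $\sum_{k>1}k\,\nu'(k)=\varkappa^{2}(2-\varkappa)/(1-\varkappa)$ rather than $\varkappa^{2}/(1-\varkappa)$ --- your test function $\max(k-1,0)$ bounds $\sum_{k>1}(k-1)\nu_{t}(k)$, not $\sum_{k>1}k\,\nu_{t}(k)$ --- but the $O(\varepsilon^{2})$ conclusion, which is all that is used later, is unaffected.)
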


Let $h(j)$ be the expected remaining number of services for a particle of
class $j$. Clearly, $h(j)$ depends only on the rooting matrix $P$ and is
finite since $\rho(P)<1$ (the network is open). Let us put
\[
h=\max_{j\in J}h(j).
\]
For a queue $x\in X$ we denote
\[
L(x)=\sum_{i=1}^{|x|}h(x_{i}),
\]
where $x=(j_{1},\dots,j_{|x|})$. Finally,
\[
L(\mu)=\int_{X}L(x)d\mu(x).
\]

As a consequence of Lemma \ref{L3}, we get the following upper bound on
$L(\nu_{t})$:

\begin{lemma}
\label{L3a} Under the hypothesis of Lemma \ref{L3}, there exists a constant
$C_{1}>0,$ which depends only on the routing matrix $P,$ such that
\begin{equation}
L(\nu_{t})\leq h\alpha(\nu_{t})+C_{1}\varepsilon^{2}, \label{71}%
\end{equation}
where $\alpha(\nu_{t})=1-\nu_{t}(\varnothing)$.
\end{lemma}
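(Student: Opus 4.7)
The plan is to split the expectation defining $L(\nu_t)$ according to the queue length and exploit the fact that, by Lemma \ref{L3}, the mass on queues of length at least two is already of order $\varepsilon^2$. The only contribution that is genuinely of order $\varepsilon$ (and not $\varepsilon^2$) comes from single-customer queues, and that contribution is trivially bounded by $h\alpha(\nu_t)$, since a single customer contributes at most $h$ to $L(x)$.

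Concretely, I would start from the bound $L(x)\le h\,|x|$ (valid by definition of $h$ and $L$) and write
\[
L(\nu_t)=\sum_{x\in X}\nu_t(x)L(x)
=\sum_{x:|x|=1}\nu_t(x)L(x)+\sum_{x:|x|\ge 2}\nu_t(x)L(x).
\]
For the first sum, the inner $L(x)$ is some $h(j)\le h$, so this term is at most $h\,\nu_t\bigl(\{|x|=1\}\bigr)$. For the second sum, using $L(x)\le h|x|$ gives an upper bound $h\sum_{k\ge 2}k\,\nu_t(k)$, and by (\ref{63}) in Lemma \ref{L3} this is at most $h\varkappa^2/(1-\varkappa)$. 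Since $\varkappa=|J|\varepsilon/\gamma_-$, for $\varepsilon$ small enough (so $\varkappa<1/2$, say), we have $h\varkappa^2/(1-\varkappa)\le C_1\varepsilon^2$ with a constant $C_1$ depending only on $h$, $|J|$, $\gamma_-$, i.e.\ only on $P$ (together with the network constants treated as fixed throughout).

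To finish, observe that $\nu_t(\{|x|=1\})\le\alpha(\nu_t)$ since $\alpha(\nu_t)=\sum_{k\ge 1}\nu_t(k)$. Plugging this into the decomposition above yields
\[
L(\nu_t)\le h\,\alpha(\nu_t)+C_1\varepsilon^{2},
\]
which is exactly (\ref{71}). There is essentially no obstacle: the statement is an immediate corollary of the tail bound (\ref{63}) in Lemma \ref{L3}, together with the crude pointwise estimate $L(x)\le h|x|$. The only mildly delicate point is book-keeping of the constant $C_1$ and verifying that it can indeed be taken to depend only on $P$ (via $h$ and $|J|$), together with the uniform lower bound $\gamma_-$ on service rates that is fixed throughout the paper.
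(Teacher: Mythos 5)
Your argument is correct and is essentially identical to the paper's proof: both split $L(\nu_t)$ into the contribution of single-customer queues, bounded by $h\alpha(\nu_t)$ via $\nu_t(1)\le\alpha(\nu_t)$, and the contribution of longer queues, bounded by $h\varkappa^2/(1-\varkappa)=O(\varepsilon^2)$ using (\ref{63}) and $L(x)\le h|x|$. Your version just spells out the constant book-keeping that the paper leaves implicit.
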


\begin{proof}
Note that $L(x)\leq h|x|$. We estimate $L(\nu_{t})$ separately over queues of
length one and over longer queues. The first contribution is $\leq h\alpha
(\nu_{t}),$ since $\nu_{t}(1)\leq\alpha(\nu_{t}),$ and for the second one we
use $\left(  \ref{63}\right)  .$
\end{proof}

\subsection{Bounds for the NLMP}

Now we return to the NLMP $\mathcal{H}$. Our goal is to prove that if
$\varepsilon>0$ is small, then any solution $\mu_{t}$ converges weakly to the
unique stationary solution $\mu^{\ast}$. We will do it, first, for the class
of \emph{finite weight} initial measures $\mu_{0}$, that is, for $\mu_{0}$
with a finite expected length of the queue:
\[
\sum_{x\in X}|x|\mu_{0}\left(  x\right)  <\infty.
\]
Note that the state $\mu_{t}$ is finite weight for all $t>0$ iff $\mu_{0}$ is
finite weight.

First, we will show that there exists a time moment $T\left(  \mu_{0}\right)
,$ such that the expected value of non-empty queues
\[
\alpha(\mu_{t})=1-\mu_{t}(\varnothing)
\]
in our system drops down below the value $C\varepsilon$ for some constant
$C>0$. Second, we prove that for some $\bar{C}>0$ it remains below the value
$\bar{C}\varepsilon$ at all later times. That implies that all the flow rates
in the system will stay below $D\varepsilon$ for some $D>0$.

To make the first step we will construct a Lyapunov function. To implement the
second step we `split' our process $\mathcal{H}$ after the time moment
$T\left(  \mu_{0}\right)  $ in two parts, such that the `smaller' one has all
the non-empty servers at the time $T\left(  \mu_{0}\right)  ,$ and the
`larger' part starts from the empty state at $T\left(  \mu_{0}\right)  .$

We begin by explaining the splitting construction. The splitting of the
process $\mathcal{H}=\left\{  \mu_{t},t\geq0\right\}  $ is a pair of processes
$\mu_{t}^{1}$ and $\mu_{t}^{2}$ on $X$ such that
\begin{equation}
\rho\mu_{t}^{1}+(1-\rho)\mu_{t}^{2}=\mu_{t},\quad t\geq0. \label{zzz}%
\end{equation}
In words, we mark some part of queues at $t=0$ and then consider the joint
evolution of marked and unmarked queues within the NLMP $\mathcal{H}$. The
processes $\mu_{t}^{1}$ and $\mu_{t}^{2}$ are also NLMP-s, but with
non-homogeneous external flows.

Formally, let $\mu_{0}=\rho\mu_{0}^{1}+(1-\rho)\mu_{0}^{2}$, where $\mu
_{0}^{i}$, $i=1,2$, are probability measures on $X$ and $0\leq\rho\leq1$. Let
us describe the dynamics of $\mu_{t}^{i}$, $i=1,2$, $t\geq0$ by means of
internal flows. We define, in the same manner as above,
\begin{equation}
u_{j}^{k}(t)=\sum_{x\in X}\mu_{t}^{k}(x)\sum_{r:x_{r}=j}\gamma(x,x_{r}),\qquad
j\in J,\qquad k=1,2, \label{YE2aa}%
\end{equation}
and%

\begin{equation}
w_{j}^{k}(t)=\sum_{i\in J}p_{ij}u_{i}^{k}(t),\qquad j\in J,\qquad k=1,2.
\label{YE3aa}%
\end{equation}
The total inflow will be defined in a different way:
\begin{equation}
v_{j}^{\prime}(t)=\lambda_{j}(t)+\rho w_{j}^{1}(t)+(1-\rho)w_{j}^{2}(t),\qquad
j\in J. \label{YE3ab}%
\end{equation}
Finally, the joint dynamics of $\mu_{t}^{k}$, $k=1,2$, satisfies the following
differential equations:
\begin{equation}
\dot{\mu}_{t}^{k}=\sum_{x\in X}\mu_{t}^{k}(x)\left[  \sum_{r=1}^{|x|}%
\gamma(x,x_{r})(\delta_{x\ominus x_{r}}-\delta_{x})+\sum_{j\in J}v_{j}%
^{\prime}(t)(\delta_{x\oplus j}-\delta_{x})\right]  , \label{YE7aa}%
\end{equation}
for $k=1,2$.

Summing up equations (\ref{YE7aa}) over $k=1,2$ with factors $\rho$ and
$(1-\rho)$, respectively, we conclude that (\ref{zzz}) holds. It is easy to
see that the evolution of each measure $\mu_{t}^{k}$ satisfies the equations
of an NLMP with a modified routing matrices: $p_{ij}^{1}=\rho p_{ij},$
$p_{ij}^{2}=\left(  1-\rho\right)  p_{ij}$ (fractions of outflows go to the
other half of the system) and an additional time-dependent Poisson inflow (the
customers that come from the other half of the system).

Now we are prepared to prove the main statement of this section.

\begin{lemma}
\label{L4} There exists a constant $\bar{C}>0$ and, for each initial
finite-weight measure $\mu_{0}$ on $X$, a time $T=T(\mu_{0})$ such that
\[
\alpha(\mu_{t})<\bar{C}\varepsilon\quad\text{for all}\quad t>T.
\]

\end{lemma}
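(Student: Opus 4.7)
The plan is to execute the two-step strategy that the authors sketch just before the statement, with $L(\mu_t)=\int L(x)\,d\mu_t(x)$ serving as the Lyapunov function. For the first step I would bring $\alpha(\mu_t)$ down to size $O(\varepsilon)$ at a specific time. The identity $h(i)-\sum_j p_{ij}h(j)=1$ (which uses $\rho(P)<1$) makes each service event in the full NLMP $\mathcal H$ reduce $L$ by exactly $1$ in expectation, while each external arrival of class $j$ increases it by $h(j)\le h:=\max_j h(j)$. Combined with $\gamma(x)\ge\gamma_-$ from (\ref{RC2}), this yields
\[
\dot L(\mu_t)\;\le\;-\gamma_-\,\alpha(\mu_t)\,+\,h|J|\varepsilon.
\]
Since $L(\mu_0)<\infty$ by the finite-weight hypothesis and $L\ge 0$, integration gives $t^{-1}\int_0^t\alpha(\mu_s)\,ds\le L(\mu_0)/(\gamma_- t)+h|J|\varepsilon/\gamma_-$, so for $t$ large enough some instant $T=T(\mu_0)\le t$ achieves $\alpha(\mu_T)\le C\varepsilon$ with $C:=2h|J|/\gamma_-$.

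For the second step I apply the splitting construction (\ref{zzz})--(\ref{YE7aa}) at time $T$, with $\rho:=\alpha(\mu_T)\le C\varepsilon$, $\mu^1_T$ the conditional of $\mu_T$ on $\{x\ne\varnothing\}$, and $\mu^2_T:=\delta_\varnothing$. Under (\ref{YE7aa}) each $\mu^k_t$ evolves as a single queue -- services merely delete, with no class transition inside the half -- driven by the common external rate $v'_j(t)=\lambda_j(t)+\rho w^1_j(t)+(1-\rho)w^2_j(t)$. Because $\mu^2_T=\delta_\varnothing$, Lemma \ref{L3} applies to $\mu^2_t$ and dominates $\alpha(\mu^2_t)$ by $|J|\sup_{s,j}v'_j(s)/\gamma_-$. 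The $\mu^1$-contribution $\rho w^1_j\le\rho\gamma_+$ is automatically $O(\varepsilon)$ thanks to Step 1, so the genuinely new piece to control is the self-feedback $(1-\rho)w^2_j(t)\le\gamma_+\alpha(\mu^2_t)$. Adding the halves, $\alpha(\mu_t)=\rho\alpha(\mu^1_t)+(1-\rho)\alpha(\mu^2_t)\le\rho+\alpha(\mu^2_t)$, so the whole argument reduces to showing $\alpha(\mu^2_t)\le O(\varepsilon)$ for all $t>T$.

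The main obstacle is closing this last bound. Plugging the self-feedback estimate directly into Lemma \ref{L3} produces a self-referential inequality of the form $\alpha(\mu^2_t)\le C'\varepsilon+(|J|\gamma_+/\gamma_-)\,\alpha(\mu^2_t)$, which does not close under the bare rate conditions (\ref{RC1})--(\ref{RC2}) alone. The remedy I have in mind is to use Lemma \ref{L3a} in place of (or alongside) Lemma \ref{L3}: its $O(\varepsilon^2)$ refinement on the $|x|\ge 2$ tail lets me split $w^2_j$ into a main singleton contribution, which is self-limiting through the direct flow balance at a single queue, and an $O(\varepsilon^2)$ remainder that gets absorbed in the right-hand side. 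A short continuity/bootstrap argument in $t$, starting from $\alpha(\mu^2_T)=0$ and relying on the Lipschitz continuity of $\mu^2_t(x)$ in $t$ guaranteed by the rate conditions, then propagates the bound $\alpha(\mu^2_t)\le A\varepsilon$ to all $t>T$. Setting $\bar C:=C+A$ produces the required $\alpha(\mu_t)\le\bar C\varepsilon$.
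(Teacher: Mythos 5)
Your Step 1 and the set-up of Step 2 (the Lyapunov function $L$, the splitting at time $T$ with $\rho=\alpha(\mu_T)$, the reduction to bounding $\alpha(\mu_t^2)$ for the half that starts empty) match the paper exactly, and you correctly identify the real difficulty: the naive feedback estimate $w_j^2\leq\gamma_+\alpha(\mu_t^2)$ fed back into the Lemma \ref{L3} domination gives $\alpha\leq C'\varepsilon+(|J|\gamma_+/\gamma_-)\alpha$, which does not close. The problem is that your proposed remedy does not actually resolve this. The ``main singleton contribution'' to $w_j^2$ is $\sum_i p_{ij}\gamma((i))\mu_t^2((i))$, which is of order $\gamma_+\alpha(\mu_t^2)$ --- it is precisely the term that creates the loop, and it is not removed by the $O(\varepsilon^2)$ refinement of Lemma \ref{L3a}, which only controls the $|x|\geq 2$ tail. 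The appeal to ``direct flow balance at a single queue'' would equate outflow with inflow, but that identity holds only in the stationary regime (it is exactly equation (\ref{QQ0})); in the transient regime it acquires a $\frac{d}{dt}\mu_t^2((i))$ correction that you have no pointwise control over, so at best you would get a time-averaged bound, not the uniform-in-$t$ bound the lemma requires. As written, the closing step of your argument is a gesture, not a proof.

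The idea you are missing is that the bootstrap must be run on $L$, not on $\alpha$, because the drift identity (\ref{41}) for $L$ contains \emph{no feedback term at all}: each service decreases the expected remaining work by exactly $1$ whether or not the customer re-enters the network, so $\dot L(\mu_t^2)\leq C_3\varepsilon-\gamma_-\alpha(\mu_t^2)$ unconditionally. The paper then sets $\alpha_+=C_3\varepsilon/\gamma_-$, $\bar\alpha=(1+h)\alpha_+$, and argues by contradiction on the first time $\bar T$ at which $\alpha(\mu_t^2)$ reaches $\bar\alpha$. Letting $T_+$ be the last time before $\bar T$ with $\alpha=\alpha_+$, one has $\dot L\leq 0$ on $[T_+,\bar T]$; all inflows up to $\bar T$ are bounded by $\varepsilon+\gamma_+\bar\alpha=O(\varepsilon)$, so Lemma \ref{L3a} applies and gives $L(\mu_{T_+}^2)\leq h\alpha_++C_5\varepsilon^2$; hence $L(\mu_{\bar T}^2)\leq h\alpha_++C_5\varepsilon^2$, while the trivial bound $L\geq\alpha$ forces $L(\mu_{\bar T}^2)\geq(1+h)\alpha_+$. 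The gap between these two is $\alpha_+-C_5\varepsilon^2>0$ for small $\varepsilon$, and that contradiction is what closes the loop. In short: the two-sided comparison $\alpha\leq L\leq h\alpha+O(\varepsilon^2)$ along an excursion of $\alpha$ above $\alpha_+$, combined with monotonicity of $L$ on that excursion, is the mechanism; a balance equation for $\alpha$ itself cannot do the job because the feedback appears with the wrong constant.
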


\begin{proof}
We use the mean number of remaining service events $L(\mu)$ as the Lyapunov
function. The function $L(\mu)\ $is finite for any finite-weight measure $\mu$
on $X,$ since $L(x)\leq h|x|$ for any $x\in X$. Let us check that
\begin{equation}
\frac{d}{dt}L(\mu_{t})=\sum_{j\in J}\lambda_{j}(t)h(j)-S(\mu_{t}), \label{41}%
\end{equation}
where
\[
S(\mu)=\sum_{x\in X:x\neq\varnothing}\gamma(x)\mu\left(  x\right)
\]
is the mean service rate for a queue with the distribution $\mu$ (for empty
queues this rate is zero). Indeed, the arrival event of type $j$ adds the
value $h(j)$ to $L(\mu).$ On the other hand, the expected number of services
of a given customer decreases by $1$ after its service is over, which explains
the second term. Clearly, $S(\mu_{t})\geq{\gamma}_{-}\alpha(\mu_{t})$, where
${\gamma}_{-}$ is the minimal service rate (for nonempty queues). So from
$\left(  \ref{41}\right)  $ we see that if ${\gamma}_{-}\alpha(\mu
_{t})>|J|\varepsilon h,$ then the function $L(\mu_{t})$ decays. Hence at a
certain moment $t=T$ we will have $\alpha(\mu_{T})\leq\frac{|J|\varepsilon
h}{\gamma_{-}}$.

It remains to prove that for all $t\geq T$ the value of $\alpha(\mu_{t})$ will
never exceed $\bar{C}\varepsilon$ for some constant $\bar{C}$ which does not
depend on $\mu_{0}$ and $T$. To do this we use the splitting construction
expalined above, with $\rho=\alpha(\mu_{T}),$ the initial states $\mu_{T}%
^{1}\left(  \cdot\right)  =$ $\mu_{T}\left(  \cdot{\LARGE |}x\neq
\varnothing\right)  ,\ \mu_{T}^{2}\left(  \varnothing\right)  =$ $1.$ By
definition, the first portion -- $\rho\mu_{t}^{1}\left(  \cdot\right)  $ -- of
our process $\mu_{t}$ remains small for all times $t>T,$ since $\rho$ is
small. The total mass of the remaining measure $\left(  1-\rho\right)  \mu
_{t}^{2}$ is close to one, but the state $\mu_{T}^{2}$ is empty, and the rate
of external flow is of the order of $\varepsilon.$ Therefore one can expect
that the state $\mu_{t}^{2}$ is sparcely populated at all times $t>T.$ This is
indeed the case:

\textbf{Claim. }\textit{Let }$\lambda(t)=\sum_{j\in J}\lambda_{j}(t)$\textit{
and for all }$t\geq0$\textit{ we have }$\lambda(t)\leq\varepsilon$\textit{.
Let the initial state }$\mu_{0}\left(  \equiv\mu_{T}^{2}\right)  $\textit{ has
no customers, i.e. }$\mu_{0}\left(  \varnothing\right)  =1.$\textit{ Then
there exists a constant }$C>0$\textit{ such that for all}$\quad t\geq0$%
\[
\alpha(\mu_{t})\leq C\varepsilon.
\]
\textbf{ Proof of the Claim. }Note first that for some $C_{3}>0$ we have
\begin{equation}
\dot{L}(t)\leq C_{3}\varepsilon-\gamma_{-}\alpha(\mu_{t}),\quad t\geq0,
\label{YE4}%
\end{equation}
as it follows from $\left(  \ref{41}\right)  .$ Note also, that if $\dot
{L}(\mu_{t})\geq0,$ then $\alpha(\mu_{t})\leq\frac{C_{3}\varepsilon}%
{\gamma_{-}}\equiv\alpha_{+}.$ We will show that for all $t\geq0$ we have
$\alpha(\mu_{t})<\bar{\alpha}\equiv\left(  1+h\right)  \alpha_{+},$ provided
$\varepsilon$ is small.

Suppose the opposite, and let $\bar{T}>0$ be the first occasion when
$\alpha(\mu_{t})$ reaches the level $\bar{\alpha}$. Denote by $T_{+}$ the last
occasion before $\bar{T}$ when $\alpha(\mu_{T_{+}})=\alpha_{+}.$ Note that the
total inflow rate $|u(t)|,$ $t\leq\bar{T}$ satisfies the inequality
\[
|u(t)|\leq\varepsilon+\gamma_{+}\bar{\alpha}=C_{4}\varepsilon.
\]
Therefore, from $\left(  \ref{71}\right)  $ we have the inequality
\[
L(\mu_{T_{+}})\leq h\alpha_{+}+C_{5}\varepsilon^{2}.
\]
Since $\dot{L}(\mu_{t})\leq0$ for all $t\in\left[  T_{+},\bar{T}\right]  $, we
have
\begin{equation}
L(\mu_{\bar{T}})\leq L(\mu_{T_{+}})\leq h\alpha_{+}+C_{5}\varepsilon^{2}.
\label{Y5}%
\end{equation}
On the other hand, we always have that $L(\mu)\geq\alpha(\mu),$ so in
particular
\[
L(\mu_{\bar{T}})\geq\alpha(\mu_{\bar{T}})=h\alpha_{+}+\alpha_{+}%
\]
which contradicts (\ref{Y5}) when $\varepsilon>0$ is small enough, and the
claim follows.

The proof of Lemma \ref{L4} is thus also finished.
\end{proof}

\subsection{Stationary states of the NLMP}

Let us assume that the external inflow is stationary, that is, $\lambda
(t)\equiv\lambda=\left(  \lambda_{j}\right)  $, $t\geq0$. If the corresponding
stationary state $\mu^{\lambda}$ of $\mathcal{H}$ exists, then the constant
internal inflow rate $w^{\ast}$ is found from the equation
\begin{equation}
P\left(  \lambda+w^{\ast}\right)  =w^{\ast}, \label{QQ0}%
\end{equation}
that is,
\begin{equation}
w^{\ast}=\left(  I-P\right)  ^{-1}P\lambda. \label{QQ1}%
\end{equation}
Let us assume that $\lambda_{j}\leq\varepsilon$, $j\in J$, where
$\varepsilon>0$ is a small parameter.

\begin{lemma}
If $\varepsilon$ is small enough, then $\mathcal{H}$ has a unique stationary
state $\mu(t)\equiv\mu^{\ast}$.
\end{lemma}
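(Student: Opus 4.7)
The plan is to reduce the nonlinear fixed-point problem to a linear Markov-chain problem via the flow-balance identity; everything else is bookkeeping.

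\emph{Setup and candidate.} At any stationary state $\mu^{\ast}$ the flow rates $u_{j}$, $w_{j}$, $v_{j}$ are time-independent constants, and balance of class-$j$ customers in a stationary queue forces $u_{j}(\mu^{\ast}) = v^{\ast}_{j}$; combined with (\ref{YE3})--(\ref{YE3a}) this yields exactly the linear system (\ref{QQ0}). Since the network is open, $\rho(P)<1$, and (\ref{QQ1}) produces the unique candidate $w^{\ast}$, with $v^{\ast} = \lambda + w^{\ast}$ of order $O(\varepsilon)$.

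\emph{Existence.} With this $v^{\ast}$ in hand, freeze $v_{j}(t) \equiv v^{\ast}_{j}$ in (\ref{YE7}). The resulting ODE is linear in $\mu$ and is the forward equation of a time-homogeneous Markov process $\tilde{Q}$ on $X$ with constant Poisson external arrivals at rates $v^{\ast}_{j}$ and service rates $\gamma(x,i)$. Reapplying the single-queue coupling/domination argument of Lemma \ref{L3} -- the inflow is now a genuine Poisson flow of total rate $O(\varepsilon) \ll \gamma_{-}$ -- the queue length of $\tilde{Q}$ is stochastically dominated by a geometric distribution with parameter strictly less than $1$. Hence $\tilde{Q}$ is positive recurrent on the irreducible component containing $\varnothing$ and admits a unique invariant measure $\mu^{\ast}$. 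Self-consistency $v(\mu^{\ast}) = v^{\ast}$ is automatic: stationarity of $\tilde{Q}$ gives $u_{j}(\mu^{\ast}) = v^{\ast}_{j}$, so the right-hand sides of (\ref{YE3})--(\ref{YE3a}) evaluated at $\mu^{\ast}$ reproduce $v^{\ast}$ by the defining equation (\ref{QQ0}) for $w^{\ast}$. Thus $\mu^{\ast}$ is a time-independent solution of the full nonlinear system (\ref{YE7}).

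\emph{Uniqueness.} Any stationary $\mu^{\ast\ast}$ of $\mathcal{H}$ satisfies (\ref{QQ0}) with $w = w(\mu^{\ast\ast})$, forcing $w(\mu^{\ast\ast}) = w^{\ast}$. Then $\mu^{\ast\ast}$ is an invariant measure of the same linear chain $\tilde{Q}$, and by uniqueness of the latter $\mu^{\ast\ast} = \mu^{\ast}$. The only delicate ingredient is the positive recurrence of $\tilde{Q}$ on the countable state space $X$, which is essentially the single-queue domination already carried out in Lemma \ref{L3}, so no new ideas are needed; the rest of the argument is linear algebra together with flow balance.
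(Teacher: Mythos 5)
Your proposal is correct and follows essentially the same route as the paper: uniqueness via the linear flow-balance system determining $w^{\ast}$, and existence by freezing the inflow at $\lambda+w^{\ast}$ and showing the resulting feedback-free (linear) chain is ergodic via the domination argument of Lemma \ref{L3}. You merely spell out the self-consistency check and the reduction of uniqueness of the stationary state to uniqueness of the invariant measure of the frozen chain, steps the paper leaves implicit.
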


The uniqueness follows from (\ref{QQ0}) and (\ref{QQ1}). For the existence, it
suffices to prove that the network without feedback is ergodic under the
stationary Poisson inflow of rate $\lambda+w^{\ast}$. The components of this
rate vector are of order $\varepsilon$, hence, the assertion follows from the
Foster criterion, see, for instance, \cite{B}, or directly from Lemma \ref{L3}.

\section{Ergodicity of the NLMP with low inflow}

\subsection{The main Theorem}

Now we turn to the main result of the paper.

\begin{theorem}
\label{T1} For a given $G$, there exists an $\varepsilon>0$ such that for all
$\lambda=\left(  \lambda_{j}\right)  $ with $\lambda_{j}<\varepsilon$, $j\in
J$ the NLMP dynamics $\mu_{t}$ with finite-weight initial measure $\mu_{0}$
converges to the unique equilibrium $\mu^{\lambda}$: for every $x\in X$
\[
\lim_{t\rightarrow\infty}|\mu_{t}(x)-\mu^{\lambda}\left(  x\right)  |=0.
\]

\end{theorem}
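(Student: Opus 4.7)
The plan is to prove ergodicity by coupling the NLMP dynamics $\mu_t$ starting from an arbitrary finite-weight initial measure $\mu_0$ with a second NLMP dynamics starting from the stationary measure $\mu^\lambda$ (which exists by the preceding lemma), and to show that in the low-load regime the coupled marginals merge as $t\to\infty$. Since coordinatewise convergence of $\mu_t(x)$ follows from convergence in total variation on finite-length configurations, it suffices to bound an appropriate distance between $\mu_t$ and $\mu^\lambda$.

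First, I would use Lemma \ref{L4} to reduce to a regime where both processes are ``nearly empty''. For the dynamics starting at $\mu_0$, Lemma \ref{L4} gives a time $T(\mu_0)$ after which $\alpha(\mu_t)<\bar{C}\varepsilon$; for $\mu^\lambda$, the Claim inside the proof of Lemma \ref{L4} (applied to the empty initial state and passed to the stationary limit) gives $\alpha(\mu^\lambda)=O(\varepsilon)$ as well. Thus, translating time, I may assume both initial measures have occupied mass of order $\varepsilon$, which is the regime where the Poisson-Hypothesis intuition applies.

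Next, I would construct a coupling $(\mu_t^{\mathrm{ag}},\mu_t^{\mathrm{dis}})$ of the two NLMPs, using the splitting construction described in this section applied to each marginal. The idea is to split each of the two measures into an ``agreement'' piece $\mu_t \wedge \mu^\lambda$ (the common mass) and a ``disagreement'' piece (the residual), and to evolve them jointly so that agreeing mass is moved synchronously by shared service and arrival events, while only the disagreement parts are driven by unmatched events. Because the splitting of Section 5 produces processes that are themselves NLMPs with modified routing matrices and additional time-dependent external Poisson inputs, I can set up the coupling so that each marginal satisfies the correct equation \eqref{YE7} while the total ``disagreement mass'' $\rho_t$ is a scalar quantity whose evolution can be directly estimated. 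Concretely, the disagreement is created at a rate proportional to the difference of the internal inflow vectors $w^{1}(t)-w^{2}(t)$, which, by the low-load bounds of Lemma \ref{L3a}, is itself of order $\varepsilon\rho_t$, while disagreement is destroyed by service events at rate $\gamma_-$ per non-empty queue.

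The main obstacle, and the reason the coupling is delicate, is the non-linear feedback: discrepancies between the two marginals propagate through the internal flow rates $v_j(t)$, so a naive coupling does not automatically contract. The heart of the argument is showing that in the low-load regime the feedback coefficient is $O(\varepsilon)$ while the restoring force from service completions is $\Omega(\gamma_-)$, yielding a Gr\"onwall-type differential inequality
\[
\tfrac{d}{dt}\rho_t \le -\gamma_- \rho_t + K\varepsilon\rho_t,
\]
which for $\varepsilon$ small enough forces $\rho_t\to 0$ exponentially. Once this contraction is established, the uniqueness of $\mu^\lambda$ from the previous lemma together with $\rho_t\to 0$ yields $\mu_t(x)\to\mu^\lambda(x)$ for every $x\in X$, proving the theorem. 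The genuinely hard work, which the paper defers to Sections \ref{cou1} and \ref{cou2}, is to give a precise probabilistic construction of the coupling for which the heuristic inequality above can be rigorously justified in the infinite-dimensional, non-linear setting.
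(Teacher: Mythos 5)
Your overall strategy---couple the dynamics started from $\mu_0$ with the dynamics started from $\mu^\lambda$, first run both into the $O(\varepsilon)$-occupancy regime via Lemma \ref{L4}, and then show the disagreement mass vanishes---is exactly the paper's strategy (your ``agreement/disagreement'' pieces are the paper's white and red pairs). But the quantitative heart of your argument, the inequality $\frac{d}{dt}\rho_t \le -\gamma_-\rho_t + K\varepsilon\rho_t$, is not justified and is in fact false in general. Disagreement mass is \emph{not} destroyed ``at rate $\gamma_-$ per non-empty queue'': completing a service in a disagreeing pair $(y,z)$ typically leaves a pair that still disagrees, so the only mechanism that converts disagreement into agreement is a pair reaching $(\varnothing,\varnothing)$ (this is the paper's rule: a red pair turns white only when both queues empty). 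If the red mass sits on pairs with long queues---which it can, since $\mu_0$ is only assumed finite-weight---then $\rho_t$ stays near $1$ for a time of order the queue length, so no contraction rate $\gamma_--K\varepsilon$ uniform in $\mu_0$ is available, and you do not get exponential decay of $\rho_t$.

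The paper's fix is to replace $\rho_t$ by a different Lyapunov function: $h(t)$, the expected number of \emph{remaining services} of red particles. Each red service completion decreases $h$ by $1$ in expectation (by the defining recursion for $h(j)$), and red services occur at rate at least of order $r_t$; creation of new red particles (white joining red, or red arriving to white) contributes only $O(\varepsilon r_t)$. This yields $\frac{d}{dt}h(t)\le -(C-2\varepsilon)r_t$, hence $\int_0^\infty r_t\,dt<\infty$, and combined with the bounded derivative of $r_t$ this gives $r_t\to 0$ --- convergence without any exponential rate. A secondary issue: your plan to keep the agreement piece equal to $\mu_t\wedge\mu^\lambda$ at all times does not define a consistent dynamics (the common part of the two measures does not evolve autonomously under the nonlinear equations); the paper instead lets the white set grow only through red pairs emptying out, and even initializes with $W_0=0$. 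So: right architecture, but the key dissipation estimate needs to be run on the remaining-service count, not on the disagreement probability itself.
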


We begin the proof with a technical lemma.

\begin{lemma}
\label{L5} There exist $C,\varepsilon_{0}>0$ such that, for any $\varepsilon
<\varepsilon_{0}$ and any initial finite-weight measure $\mu_{0}$, there
exists a time $T=T(C,\varepsilon,\mu_{0})$ such that
\[
{\mathbb{E}}_{\mu_{t}}\left\vert x\right\vert <C\varepsilon\qquad\text{for
all}\quad t>T.
\]

\end{lemma}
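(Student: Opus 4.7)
The plan is to bootstrap Lemma \ref{L4} to a bound on the expected queue length using the splitting construction (\ref{zzz})--(\ref{YE7aa}). First I apply Lemma \ref{L4} to obtain $T_1=T(\mu_0)$ with $\alpha(\mu_t)<\bar{C}\varepsilon$ for every $t>T_1$; since $\sum_j u_j(t)=S(\mu_t)\leq\gamma_+\alpha(\mu_t)$, this forces $v_j(t)=\lambda_j(t)+w_j(t)\leq\varepsilon+\gamma_+\bar{C}\varepsilon=:C_0\varepsilon$ for all $t>T_1$. At $t=T_1$ I split $\mu_{T_1}=\rho\mu^1_{T_1}+(1-\rho)\mu^2_{T_1}$ with $\rho=\alpha(\mu_{T_1})\leq\bar{C}\varepsilon$, $\mu^1_{T_1}(\cdot)=\mu_{T_1}(\cdot\mid x\neq\varnothing)$, and $\mu^2_{T_1}=\delta_{\varnothing}$, so that both $\mu^k_t$ satisfy (\ref{YE7aa}) with the common inflow $v'_j(t)=v_j(t)\leq C_0\varepsilon$ and the identity $\mu_t=\rho\mu^1_t+(1-\rho)\mu^2_t$ is preserved for $t\geq T_1$. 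In particular,
\[
\mathbb{E}_{\mu_t}|x|=\rho\,\mathbb{E}_{\mu^1_t}|x|+(1-\rho)\,\mathbb{E}_{\mu^2_t}|x|.
\]

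The second summand is handled by Lemma \ref{L3} directly: $\mu^2_t$ is the law of a single queue that starts empty at $T_1$ and receives Poisson inflows with total rate at most $|J|C_0\varepsilon$. Applying Lemma \ref{L3} with $C_0\varepsilon$ in place of $\varepsilon$ yields $\mathbb{E}_{\mu^2_t}|x|\leq\frac{|J|C_0\varepsilon/\gamma_-}{1-|J|C_0\varepsilon/\gamma_-}=O(\varepsilon)$ uniformly on $t>T_1$.

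For the first summand I construct a monotone coupling of the single-server process $X_t$ underlying $\mu^1_t$ (a time-inhomogeneous Markov process on $X=J^*$ with total arrival rate $\leq|J|C_0\varepsilon$ and total service rate $\gamma(X_t)\geq\gamma_-$ whenever $X_t\neq\varnothing$) with an auxiliary M/M/1 queue $M_t$ of arrival rate $|J|C_0\varepsilon$, service rate $\gamma_-$, and $M_{T_1}$ distributed as $|x|$ under $\mu^1_{T_1}$: one thins the arrivals so that every arrival to $X$ is simultaneously an arrival to $M$, and pairs each M/M/1-service with a service in $X$ whenever $X$ is non-empty, which yields $|X_t|\leq M_t$ almost surely. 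Because $\mu_0$ is finite-weight, $\mathbb{E}|M_{T_1}|=\rho^{-1}\mathbb{E}_{\mu_{T_1}}|x|<\infty$. For $\varepsilon$ small the dominating M/M/1 is subcritical with stationary first moment $O(\varepsilon)$, and the usual geometric ergodicity of a subcritical M/M/1 with finite-mean initial law gives $\mathbb{E}M_t\to O(\varepsilon)$; hence there is $T_2=T_2(\mu_0)>T_1$ such that $\mathbb{E}_{\mu^1_t}|x|\leq\mathbb{E}M_t\leq 1$ for every $t>T_2$. Combining the two bounds gives $\mathbb{E}_{\mu_t}|x|\leq\bar{C}\varepsilon+O(\varepsilon)\leq C\varepsilon$ for $t>T_2$, with an absolute constant $C$. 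The only delicate step is the monotone coupling with M/M/1 together with the invocation of first-moment convergence to stationarity: the underlying $\mu^1$-single-server is not a birth-death chain on $\mathbb{Z}_+$ (its service rate depends on the full configuration $x$, not just on $|x|$), so the dominance has to be arranged pathwise; everything else is a direct assembly of Lemmas \ref{L3} and \ref{L4} through the splitting.
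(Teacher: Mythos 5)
Your argument is correct. The paper's own proof of Lemma \ref{L5} is a single sentence (``follows from Lemma \ref{L3a} and Lemma \ref{L4}''), so the comparison is really with the implicit argument behind it. You use the same decomposition the paper introduces in the proof of Lemma \ref{L4}: split $\mu_{T_1}$ into the conditioned-on-nonempty part $\mu^1$ of mass $\rho=\alpha(\mu_{T_1})=O(\varepsilon)$ and the empty part $\mu^2$, bound the common inflow by $O(\varepsilon)$ using Lemma \ref{L4}, and control the $\mu^2$ term by the single-queue domination of Lemma \ref{L3}. Where you genuinely add something is the $\mu^1$ term: Lemmas \ref{L3} and \ref{L3a} are stated only for queues that start empty, while $\rho\,\mathbb{E}_{\mu^1_t}|x|$ equals $\mathbb{E}_{\mu_{T_1}}|x|$ at $t=T_1$ and need not be $O(\varepsilon)$, so some draining argument is required; your pathwise domination of the $\mu^1$-queue by a subcritical M/M/1 with finite-mean initial law, plus first-moment convergence of that M/M/1, supplies exactly that missing step. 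The coupling is legitimate for the reason you flag: one only needs every arrival to $X$ to be an arrival to $M$, and every $M$-departure to be matched by an $X$-departure when the two lengths are equal and positive, which the rate condition $\gamma(x)\ge\gamma_-$ guarantees --- it is the same comparison the paper uses to prove Lemma \ref{L3}, run from a non-empty initial state. The one point worth writing out explicitly is the convergence $\mathbb{E}M_t\to\varkappa/(1-\varkappa)$ for a finite-mean initial law; it follows, for instance, from the pathwise bound $M^{(n)}_t\le n+M^{(0)}_t$ together with dominated convergence over the initial distribution. Net effect: your route is compatible with the paper's but is more complete, since the paper's one-line citation does not by itself control the contribution of the initially non-empty queues.
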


\begin{proof}
This result follows from Lemma \ref{L3a} and Lemma \ref{L4}.
\end{proof}

\subsection{\label{cou1} The coupling of Non-Linear Markov Processes}

We are going to construct a coupling $\mathcal{H}^{\prime}\otimes
\mathcal{H}^{\prime\prime}$ of two NLMP $\mathcal{H}^{\prime}$ and
$\mathcal{H}^{\prime\prime},$ which live on the same graph, have the same
inflows, and differ only by their initial states, $\mu_{0}^{\prime}$ and
$\mu_{0}^{\prime\prime}$. We remind the reader that the coupling of the
processes $\mu_{t}^{\prime}$ and $\mu_{t}^{\prime\prime}$ is a process $M_{t}$
on $X\times X$ with marginals $\mu_{t}^{\prime}$ and $\mu_{t}^{\prime\prime}.$
In our case the process $M_{t}$ will be again a NLMP. Its initial state can be
any coupling of $\mu_{0}^{\prime}$ and $\mu_{0}^{\prime\prime};$ for example
the product $\mu_{0}^{\prime}\times\mu_{0}^{\prime\prime}$ will go.

The coupling we construct has a form of a sum: $M_{t}=R_{t}+W_{t};$ the two
(positive) processes $R_{t}$ and $W_{t}$ are interacting NLMP-s, except that
the masses $r_{t}\equiv R_{t}\left(  X\times X\right)  ,w_{t}\equiv
W_{t}\left(  X\times X\right)  $ of each of them can be less than one, while
of course $r_{t}+w_{t}=1.$ The processes $R_{t}$ and $W_{t}$ are processes on
pairs of queues, and we will call them the \textit{red }and \textit{white
}pairs; the customers in these queues will be likewise called \textit{red }and
\textit{white }customers. The choice of the initial states $M_{0}^{r}$ and
$M_{0}^{w}$ of the processes $R_{t}$ and $W_{t}$ are not very important; one
option can be $R_{0}=M_{0},W_{0}=0.$ The key property for us will be that the
process $W_{t}$ has its support on the diagonal $X\subset X\times X,$ so we
will be done if we will show that $w_{t}\rightarrow1$ as $t\rightarrow\infty.$

We start with the verbal\textbf{ Definition} of the evolution of the
(interacting) processes $R_{t}$ and $W_{t}.$ We then write down the
differential equation, equivalent to that definition.

As usual, we couple the (equal) inflows to $\mathcal{H}^{\prime}$ and
$\mathcal{H}^{\prime\prime}$ in such a way that the customers arrive in pairs
(of identical customers) simulataneously to $\mathcal{H}^{\prime}$ and
$\mathcal{H}^{\prime\prime}.$ The arriving external pairs of customers have
white colour. The fraction $w_{t}$ of arriving identical pairs goes into the
process $W_{t}.$ The remaining amount, $r_{t},$ goes into red process, $R_{t}%
$, and the corresponding white arriving customers become red instantly.

The service of white customers also happens in pairs, so the two paired
(identical) customers start and end their service sinchronously. After the
service is over, the newly created pair either leaves the network, or else go
to the \textit{same} node, according to the routing matrix, i.e. the routing
of the two customers is identical. The rule of staying white pair of customers
or becoming red customers are then the same as for the arriving external white pairs.

Each red customer $c$ is served individually. If $c$ was participating in the
process $\mathcal{H}^{\prime}$ (resp., $\mathcal{H}^{\prime\prime}$), then
after the service he either leaves the network or jumps to the another node,
still being in the process $\mathcal{H}^{\prime}$ (resp., $\mathcal{H}%
^{\prime\prime}$). If $c$ was belonging to the red pair of queues $\left(
q^{\prime}\oplus c,q^{\prime\prime}\right)  ,$ and if $q^{\prime}%
\neq\varnothing$ or $q^{\prime\prime}\neq\varnothing$, then the resulting pair
$\left(  q^{\prime},q^{\prime\prime}\right)  $ stays red, while in the case
when both $q^{\prime}=\varnothing$ and $q^{\prime\prime}=\varnothing,$ the red
pair $\left(  c,\varnothing\right)  $ turns into the pair $\left(
\varnothing,\varnothing\right)  ,$ which is declared white. If $c$ stays in
the network and enters into the pair $\left(  q^{\prime},q^{\prime\prime
}\right)  $ of queues, then the resulting pair $\left(  q^{\prime}\oplus
c,q^{\prime\prime}\right)  $ is declared red, independently of the colour of
the pair $\left(  q^{\prime},q^{\prime\prime}\right)  .$

Summarizing, we see that a white particle turns red in two cases: if it joins
a red queue, or if a red particle arrives to a white pair of queues (then all
the white customers in both queues turn red). The queues in the white pair are
identical, while the queues in the red pair may be identical or different. The
pair $\left(  \varnothing,\varnothing\right)  $ is always white.

Our goal will be to prove that the fraction of red pairs $r_{t}$ vanishes as
$t\rightarrow\infty.$ It will imply that the limits of $\mu_{t}^{\prime}$ and
$\mu_{t}^{\prime\prime}$, as $t\rightarrow\infty,$ exist and coinside with
$\mu^{\lambda}.$ In the end we will drop the assumption of $\mu_{0}^{\prime}$
to be finite-weight.

\subsection{\label{cou2} Equations for the coupled NLMP}

As was explained above, the state of our coupled process is given by the two
measures -- white and red:
\[
\left\{  W_{t}\left(  x\right)  ,R_{t}\left(  y,z\right)  :\quad x,y,z\in
X\right\}  ,
\]
where we think of $x$ as a pair $\left(  x,x\right)  \in X\times X$ on the
diagonal. Sometime we will omit the subscript $t.$

Given such a state, let us define the corresponding -- white and red -- flows.
The outflows are
\[
u_{j}(W)=\sum_{x\in X}W(x)\sum_{i:x_{i}=j}\gamma(x,i),
\]%
\[
u_{j}^{\prime}(R)=\sum_{y,z}R(y,z)\sum_{i:y_{i}=j}\gamma(y,i),
\]%
\[
u_{j}^{\prime\prime}(R)=\sum_{y,z}R(y,z)\sum_{i:z_{i}=j}\gamma(z,i).
\]
The corresponding internal inflows are
\[
v_{j}(W)=\sum_{k}p_{kj}u_{k}(W),
\]%
\[
v_{j}^{\prime}(R)=\sum_{k}p_{k,j}u_{j}^{\prime}(R),
\]%
\[
v_{j}^{\prime\prime}(R)=\sum_{k}p_{k,j}u_{j}^{\prime\prime}(R).
\]
Let us define $v_{j}\left(  W,R\right)  =v_{j}(W)+v_{j}^{\prime}%
(R)+v_{j}^{\prime\prime}(R).$

Now we write the differential equations for the white and red measures $W$ and
$R,$ which describe formally the evolution explained in the previous section.
\begin{align*}
\frac{d}{dt}W_{t}(x)  &  =-W_{t}(x)\left[  \sum_{j}\left(  \lambda_{j}%
+v_{j}(W_{t},R_{t})\right)  +\sum_{i=1}^{|x|}\gamma(x,i)\right] \\
+\ W_{t}(\bar{x}  &  :\bar{x}\oplus j=x)(\lambda_{j}+v_{j}(W_{t}))+\sum
_{i=1}^{|x|+1}\sum_{\tilde{x}:\tilde{x}\ominus\tilde{x}_{i}=x}W_{t}(\tilde
{x})\gamma(\tilde{x},\tilde{x}_{i})\\
&  +\ \mathbb{I}_{\left\{  x=\varnothing\right\}  }\left(  \sum_{j}\left[
R_{t}(j,\varnothing)+R_{t}(\varnothing,j)\right]  \gamma(j)\right)  ;\
\end{align*}
\begin{align*}
\frac{d}{dt}R_{t}(y,z)  &  =-R_{t}(y,z)\left[  \sum_{j}\left(  \lambda
_{j}+v_{j}(W_{t},R_{t})\right)  +\sum_{i=1}^{|y|}\gamma(y,i)+\sum_{i=1}%
^{|z|}\gamma(z,i)\right] \\
+R_{t}(\bar{y}  &  :\bar{y}\oplus j=y,\bar{z}:\bar{z}\oplus j=z)(\lambda
_{j}+v_{j}(W_{t}))\\
+R_{t}(\bar{y}  &  :\bar{y}\oplus j=y,z)v_{j}^{\prime}(R)\ +\ R_{t}(y,\bar
{z}:\bar{z}\oplus j=z)v_{j}^{\prime\prime}(R)\\
&  +\mathbb{I}_{\left\{  (y,z)=\left(  j,\varnothing\right)  \right\}  }%
W_{t}(\varnothing)v_{j}^{\prime}(R)\ +\mathbb{I}_{\left\{  (y,z)=\left(
\varnothing,j\right)  \right\}  }W_{t}(\varnothing)v_{j}^{\prime\prime}(R)\\
&  +\mathbb{I}_{\left\{  (y,z)\neq\left(  \varnothing,\varnothing\right)
\right\}  }\left[  \sum_{i=1}^{|y|+1}\sum_{\bar{y}:\bar{y}\ominus\bar{y}%
_{i}=y}R(\bar{y},z)\gamma(\bar{y},\bar{y}_{i})+\sum_{i=1}^{|z|+1}\sum_{\bar
{z}:\bar{z}\ominus\bar{z}_{i}=z}R(y,\bar{z})\gamma(\bar{z},\bar{z}%
_{i})\right]  .
\end{align*}

\subsection{Proof of stability}

Now the colored dynamics is explicitely defined; it is a non-linear Markov
process on the space of colored pairs of queues. Any pair is either white (and
then the queues are identical and synchronized) or red (and then the queues
evolve independently). The theorem \ref{T1} follows from the following

\begin{theorem}
If $\varepsilon$ is small enough, then $r_{t}\rightarrow0$ as $t\rightarrow
\infty.$
\end{theorem}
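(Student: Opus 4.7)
The plan is to use a Lyapunov argument on the red process with the functional
$L_R(t)=\sum_{y,z}R_t(y,z)(L(y)+L(z))$,
where $L(x)=\sum_i h(x_i)$ is the expected-remaining-services functional introduced before Lemma \ref{L3a}. I would take the second marginal to be the stationary state, $\mu_0^{\prime\prime}=\mu^{\lambda}$, so that both marginals are finite-weight throughout. Lemma \ref{L5} then provides a time $T_0$ after which $\mathbb{E}_{\mu_t^{\prime}}|x|+\mathbb{E}_{\mu_t^{\prime\prime}}|x|=O(\varepsilon)$; in particular the white Lyapunov mass $L_W(t):=\sum_x W_t(x)L(x)$ is bounded by $L(\mu_t^{\prime})=O(\varepsilon)$, since $W_t(x)\le\mu_t^{\prime}(x)$ by the marginal identity.

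The central calculation is $\dot L_R$, extracted from the equations of Section \ref{cou2}. For a single red service event at class $c$, combining source and target contributions yields an average change $-h(c)+\sum_j p_{cj}\bigl(h(j)+2L_W\bigr)=-1+2(1-p_{c0})L_W$: the $-1$ comes from the standard open-network identity $h(c)=1+\sum_j p_{cj}h(j)$, while the correction $2L_W$ records the fact that when the service output lands at a non-empty white pair $(x,x)$, that pair turns red and contributes an extra $2L(x)$ to $L_R$. Since the total service rate at red pairs is at least $\gamma_- r_t$ (each red pair has a non-empty coordinate), this produces a contribution to $\dot L_R$ of at most $-\gamma_- r_t(1-2L_W)$. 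External and white-paired arrivals at red pairs add $2h(j)$ per event at rate $r_t(\lambda_j+v_j(W))$; using $\sum_j v_j(W)\le\gamma_+\alpha_W=O(\varepsilon)$, their joint contribution is $O(\varepsilon)r_t$. Combining, for $\varepsilon$ small and $t\ge T_0$,
\begin{equation*}
\dot L_R(t)\le -\tfrac{\gamma_-}{2}\,r_t.
\end{equation*}

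Since $L_R\ge 0$, integrating gives $\int_{T_0}^{\infty}r_t\,dt\le 2L_R(T_0)/\gamma_-$. The right-hand side is finite because $L_R(t)\le L(\mu_t^{\prime})+L(\mu_t^{\prime\prime})$ and both marginals are finite-weight (the product coupling of the finite-weight $\mu_0$ with $\mu^{\lambda}$, which is itself finite-weight in the low-inflow regime). The rate conditions (\ref{RC1})--(\ref{RC2}) bound $|\dot r_t|$ uniformly, so $r_t$ is Lipschitz; a non-negative Lipschitz function with finite integral on $[T_0,\infty)$ must tend to zero. The triangle inequality on the marginals of $R_t$ gives $\|\mu_t^{\prime}-\mu^{\lambda}\|_{TV}\le r_t$, which in turn delivers Theorem \ref{T1}.

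The main obstacle is the careful verification of the expression for $\dot L_R$. The delicate point is tracking what happens when a red particle emitted by a red pair lands on a non-empty white pair $(x,x)$: the explicit $R_t$-equation in Section \ref{cou2} lists this conversion only for $x=\varnothing$, but the general case must be included, and it is precisely this contribution that replaces the clean coefficient $-\gamma_-$ by $-\gamma_-(1-2L_W)$. All other terms (red-to-red particle transfers, pair-destruction at singleton red pairs, paired external and internal arrivals) must either cancel algebraically in $\dot L_R$ or be absorbed into the $O(\varepsilon)r_t$ remainder via the bounds $L_W,\alpha_W=O(\varepsilon)$; once these bookkeeping identities are checked, the decay is driven purely by the $-1$-per-service mechanism coming from $h(c)=1+\sum_j p_{cj}h(j)$.
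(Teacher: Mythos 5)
Your proposal is correct and follows essentially the same route as the paper: your functional $L_R(t)$ is precisely the paper's $h(t)$ (the expected number of remaining services of red particles in both marginals), your differential inequality $\dot L_R\le-\tfrac{\gamma_-}{2}r_t$ matches the paper's $\dot h\le-(C-2\varepsilon)r_t$ with the same two $O(\varepsilon r_t)$ red-creation corrections, and the conclusion via finite integral plus bounded derivative of $r_t$ is identical. Your observation that the displayed $R_t$-equation records the red-to-white conversion only for the empty white pair, while the general non-empty case must be included in the bookkeeping, is a fair and accurate remark about the paper's own presentation.
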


\begin{proof}
First, we start the two NLMP $\mathcal{H}^{\prime}$ and $\mathcal{H}%
^{\prime\prime}$ and run them for the time $T$, given by Lemma \ref{L4}. After
that time we couple them as was explained above. The rates of all the flows in
the coupled system do not exceed $C_{1}\varepsilon$ at any time $t\geq T$.
Denote by $h(t)$ the expected number of remaining services \textit{of red
particles only} in both $\mathcal{H}^{\prime}$ and $\mathcal{H}^{\prime\prime
}$. By assumption, $h(0)$ is finite and so $h(t)$ remains finite for all
$t\geq0$. The change of $r_{t}$ and $h(t)$ in time is due to two factors. The
first one is the termination of the service of red particles; it contributes
to decline of $h(t)$, which happens with the rate $Cr_{t}.$ The second one is
the creation of new red particles, and it makes $h\left(  t\right)  $ to
increase, either when

\begin{enumerate}
\item white particles arrive to red queues, or when

\item red particles arrive to white queues.
\end{enumerate}

In the case 1 the rate of emergence of the \textit{new} red particles (and
hence the rate of growth of $h(t)$) is at most of the order of $\varepsilon
r_{t}$, since the flow of red particles has order $r_{t}$, the fraction of
\textit{non-empty} white queues has order at most $\varepsilon$ and the mean
length of these queues is of order $1$. In the case 2, the rate of growth of
$h(t)$ is also of order $\varepsilon r_{t}$ since the flow of white particles
has order $\varepsilon$ and the fraction of red queues has order $r_{t}$.
Summarizing, we arrive at the following differential inequality:
\[
\frac{d}{dt}h(t)\leq-\left(  C-2\varepsilon\right)  r_{t}.
\]
Since $h(t)$ is positive, the integral $\int_{0}^{\infty}r_{t}dt\ $is finite.
Together with the fact that $r_{t}\geq0$ and the derivative of $r_{t}$ is
bounded, it implies that
\[
\lim_{t\rightarrow\infty}r_{t}=0.
\]

\end{proof}

\section{Infinite expected initial length}

Here we say few words on extension of Theorem \ref{T1} to the case of infinite
expected initial queue length. We give a sketch of the proof.

In order to reduce the situation to that of a finite initial length, we
separate a small fraction $\beta>0$ of queues so that the remaining ones are
of finite expected length at $t=0$. The value of $\beta$ can be chosen
arbitrarily small.

The flows to and from the separated part are of order $\beta$. We need to
prove that, for finite initial weights and for different inflows that differ
by order $\beta$, the solutions approach each other up to a distance of order
$\beta$ as well.

First, we prove that the fraction of red queues must eventually drop down to
the order $\beta$. This follows from the differential equations for the
dynamics of $L(t)$ and $r_{t}$. So we can suppose that $r_{T}\sim\beta$ at
some time $T.$

At this time $T$ we separate, additionally, all the red queues (their fraction
is $r_{T}$) and consider the remaining process that starts with no red queues.
Then we can use the argument similar to that in the proof of Lemma \ref{L5}
and show that $r_{t}$ remains of order $\beta$ for all $t\geq T$. The
convergence to the equilibrium follows since $\beta$ can be chosen arbitrarily small.


\end{document}